\newcommand{\B}[1]{{\mathbf #1}}
\newtheorem{theorem}[subsection]{Theorem}
\newtheorem{corollary}[subsection]{Corollary}
\newtheorem{lemma}[subsection]{Lemma}
\theoremstyle{definition}
\newtheorem{example}[subsection]{Example}
\theoremstyle{remark}
\newtheorem{remark}[subsection]{Remark}
\newtheorem{conjecture}[subsection]{Conjecture}
\numberwithin{figure}{section}
\numberwithin{table}{section}
\numberwithin{equation}{section}
\newcommand{\OP}{\operatorname}
\begin{document}

\title{Qualitative counting closed geodesics}
\author[Karlhofer]{Bastien Karlhofer}
\address{BK: University of Aberdeen}
\email{r01bdk17@abdn.ac.uk}
\author[K\k{e}dra]{Jarek K\k{e}dra}
\address{JK: University of Aberdeen and University of Szczecin}
\email{kedra@abdn.ac.uk}
\author[Marcinkowski]{Micha\l\ Marcinkowski}
\address{MM: IMPAN, Wroc\l aw}
\email{marcinkow@math.uni.wroc.pl}
\author[Trost]{Alexander Trost}
\address{AT: University of Aberdeen}
\email{r01aat17@abdn.ac.uk}

\begin{abstract}

We investigate the geometry of word metrics on fundamental groups of manifolds
associated with the generating sets consisting of elements represented by
closed geodesics. We ask whether the diameter of such a metric is finite or
infinite. The first answer we interpret as an abundance of closed geodesics,
while the second one as their scarcity. We discuss examples for both cases.

\end{abstract}

\maketitle
\section{Introduction}
It is a classical observation due to John Milnor \cite{MR0232311} and Albert
Schwarz \cite{MR0075634} that the word metric on the fundamental group of a
closed manifold carries information about the Riemannian metric of the
universal cover (the metrics are quasi-isometric).  In this approach the word
metric on the fundamental group is associated with a finite generating set.
In the present paper we explore the word metrics on the fundamental group
associated with geometrically meaningful generating sets. Specifically, we
consider generating sets consisting of closed local geodesics.  We then ask the
most basic question as to whether the diameter of such a word metric is finite
or infinite. The first answer is interpreted as abundance of closed geodesics
while the second as their scarcity.  We present examples for both cases.

\subsection{Statement of the results}
Let $(M,d)$ be a complete Riemannian manifold and let $C_x$ denote the set of
closed local geodesics based at $x\in M$.  Let $\Gamma_x \subseteq \pi_1(M,x)$
denote the subgroup of the fundamental group generated by elements represented
by closed local geodesics. We are interested in the word norm on $\Gamma_x$
associated with the set $S$ of the elements represented by closed geodesics.
We call it the {\em closed geodesic norm}. We apply methods of geometric group
theory to prove the following results.

\begin{theorem}\label{T:negative}
Let $(M,d)$ be a closed Riemannian manifold of negative curvature admitting a
geodesic symmetry through $x\in M$.  If $\Gamma_x$ is nonabelian then the
diameter of the closed geodesic norm is infinite.
\end{theorem}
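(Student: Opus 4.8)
The plan is to use the involutive isometry $\sigma$ to determine exactly how it acts on the generators $S$, and then to feed this into the theory of homogeneous quasimorphisms. First I would record the geometric input. Since $\sigma$ is an isometry fixing $x$ with $d\sigma_x=-\mathrm{id}$, it sends the geodesic issuing from $x$ with velocity $v$ to the geodesic issuing from $x$ with velocity $-v$. If $\gamma$ is a smoothly closed geodesic through $x$, parametrised so that $\gamma(0)=x$, then $\sigma\circ\gamma$ is a geodesic with the same initial point and opposite initial velocity, so by uniqueness of geodesics $\sigma(\gamma(t))=\gamma(-t)$: the isometry $\sigma$ carries $\gamma$ to the same closed geodesic traversed backwards. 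Passing to based homotopy classes, the automorphism $\sigma_*$ of $\pi_1(M,x)$ induced by $\sigma$ therefore satisfies $\sigma_*(s)=s^{-1}$ for every $s\in S$. In particular $\sigma_*$ permutes $S$ and preserves $\Gamma_x=\la S\ra$, and since an isometry of a connected manifold is determined by its $1$-jet at a point we have $\sigma^2=\mathrm{id}$, so $\sigma_*$ is an involution. (This step is where I use that the generators come from genuinely closed geodesics rather than geodesic loops with a corner at $x$, for which the reversal identity would fail.)

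The reduction is routine quasimorphism bookkeeping. Suppose $\phi\colon\Gamma_x\to\B R$ is a nonzero homogeneous quasimorphism with $\phi\circ\sigma_*=\phi$. Then for each $s\in S$ we get $\phi(s)=\phi(\sigma_*(s))=\phi(s^{-1})=-\phi(s)$, so $\phi$ vanishes on $S$. If the closed geodesic norm had finite diameter $D$, then every $g\in\Gamma_x$ would be a product of at most $D$ elements of $S=S^{-1}$, and the defect inequality would give $|\phi(g)|\le (D-1)\,D(\phi)$; this bounds $\phi$, contradicting the fact that a nonzero homogeneous quasimorphism grows linearly along powers and is therefore unbounded. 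Hence it suffices to produce a single nonzero $\sigma_*$-invariant homogeneous quasimorphism on $\Gamma_x$.

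For the existence I would argue as follows. Since $M$ is closed and negatively curved, its universal cover is $\mathrm{CAT}(-\kappa)$ for some $\kappa>0$ and $\pi_1(M)$ is a torsion-free Gromov hyperbolic group acting on it geometrically. A nonabelian subgroup of a torsion-free hyperbolic group is non-elementary, because an elementary torsion-free subgroup is infinite cyclic and hence abelian; thus $\Gamma_x$ contains a Schottky free subgroup and acts non-elementarily on the hyperbolic universal cover. The Bestvina--Fujiwara machinery then furnishes an abundance of homogeneous quasimorphisms on $\Gamma_x$, built from counting along geodesics, and for independent hyperbolic elements one may prescribe their values. I would symmetrise: for any homogeneous quasimorphism $\psi$ the average $\phi=\tfrac12(\psi+\psi\circ\sigma_*)$ is $\sigma_*$-invariant, using $\sigma_*^2=\mathrm{id}$. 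Everything now reduces to choosing $\psi$ with $\phi\neq0$, that is, ensuring that not every homogeneous quasimorphism is $\sigma_*$-anti-invariant.

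This is the \emph{main obstacle}, and it is exactly where nonabelianness is used. On homogeneous quasimorphisms $\sigma_*$ acts through the conjugacy class of $\sigma_*(g)=\mathrm{rev}(g)^{-1}$, the inverse of the word-reversal of $g$, so a $\sigma_*$-invariant quasimorphism fails to exist only if $\sigma_*(g)$ is conjugate to $g^{-1}$ — equivalently $\mathrm{rev}(g)$ is conjugate to $g$ — for every $g$. I would defeat this by choosing two non-commuting $s_1,s_2\in S$ generating a free subgroup and the reversal-asymmetric word $g=s_1s_1s_2s_1s_2s_2$: the cyclic words of $\sigma_*(g)$ and of $g^{-1}$ are reverses of one another and are not cyclically equivalent, so $\sigma_*(g)$ is conjugate to neither $g$ nor $g^{-1}$, and $g,\sigma_*(g)$ are independent hyperbolic elements. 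A Bestvina--Fujiwara quasimorphism $\psi$ with $\psi(g)\neq0$ and $\psi(\sigma_*(g))=0$ then yields $\phi(g)=\tfrac12\psi(g)\neq0$, completing the construction. The points demanding care are to check that $g$ and $\sigma_*(g)$ are genuinely independent (no pair of conjugate powers), so that the value-prescribing form of the Bestvina--Fujiwara construction applies, and that the resulting quasimorphisms are defined on all of $\Gamma_x$ rather than only on the free subgroup.
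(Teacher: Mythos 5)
Your geometric input ($\sigma_*(s)=s^{-1}$ for every $s$ represented by a closed geodesic through $x$, via uniqueness of geodesic extension) and your reduction (a nonzero homogeneous quasimorphism vanishing on the generating set forces infinite diameter) are both correct, and they match the skeleton of the paper's argument. The gap is in your existence step, and it is genuine. Your verification that $\sigma_*(g)$ is conjugate to neither $g$ nor $g^{-1}$, and that $g$ and $\sigma_*(g)$ are independent, is carried out by cyclic-word combinatorics \emph{in the free group on $s_1,s_2$}. This is the wrong group, twice over. First, two non-commuting elements of a torsion-free hyperbolic group need not generate a free group on those two elements (closed hyperbolic $3$-manifolds can have $2$-generated fundamental group, for instance), so your word $s_1s_1s_2s_1s_2s_2$ has no canonical reduced form to reason about. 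Second, and more seriously, the Bestvina--Fujiwara independence and separation hypotheses must hold in the group on which the quasimorphism is built, namely $\Gamma_x$ acting on the universal cover: elements that are non-conjugate in a subgroup can perfectly well be conjugate, or have conjugate powers, in the ambient group. Moreover, to prescribe $\psi(g)\neq 0$ you also need that $g^n$ is not conjugate to $g^{-n}$ in $\Gamma_x$ (every homogeneous quasimorphism vanishes identically on elements with $g^n\sim g^{-n}$), and this too is only checked combinatorially. You flag exactly these points as ``demanding care,'' but no argument is supplied, and none of them follows from softness; this is where the proof is missing its key step.

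It is worth seeing how the paper avoids the difficulty, because it dissolves your ``main obstacle'' rather than confronting it. Take $g=[s_1,s_2]$ for noncommuting $s_1,s_2\in S$ (this is where nonabelianness enters). Then $\sigma_*(g)=[s_1^{-1},s_2^{-1}]=(s_1s_2)^{-1}[s_1,s_2](s_1s_2)$ is conjugate to $g$ \emph{itself}, not to $g^{-1}$; since homogeneous quasimorphisms are class functions, your symmetrization $\phi=\tfrac12(\psi+\psi\circ\sigma_*)$ automatically satisfies $\phi([s_1,s_2])=\psi([s_1,s_2])$ for \emph{every} homogeneous quasimorphism $\psi$, while still vanishing on $S$. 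So all that is needed is some $\psi$ with $\psi([s_1,s_2])\neq 0$, and the paper constructs one explicitly as a Barge--Ghys quasimorphism: $\psi_\alpha(h)=\int_{\gamma_h}\alpha$, integration over the unique closed geodesic in the free homotopy class of $h$. Because the conjugacy class of $[s_1,s_2]$ is $I$-invariant, its closed geodesic $\gamma$ satisfies $I\circ\gamma=\gamma$ up to a shift, and choosing $\alpha=\beta+I^*\beta$ with $\beta$ a bump $1$-form along $\gamma$ gives $\psi_\alpha([s_1,s_2])=2\int_\gamma\beta>0$, while $I^*\alpha=\alpha$ forces vanishing on $S$. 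Note that this explicit construction does work that Bestvina--Fujiwara softness cannot: it certifies that $[s_1,s_2]$ is not even stably conjugate to its inverse in $\pi_1(M,x)$, which is precisely the kind of unverified input your approach requires. If you want to salvage your framework, replace your word $g$ by the commutator and replace the appeal to value-prescription by this differential-form construction.
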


The situation changes if the manifold is only non-positively curved.
A rich source of examples is provided by locally symmetric spaces
$M = \Gamma \backslash G/K$, where $G$ is a semisimple Lie group,
$K\subset G$ a maximal compact subgroup and $\Gamma \subset G$ a lattice.
The natural metric on $M$ is non-positively curved and we have the
following result. 

\begin{theorem}\label{T:chevalley}
Let $(M,d)$ be a complete Riemannian manifold of nonpositive curvature
admitting a geodesic symmetry through $x\in M$.  If $\pi_1(M,x)$ is isomorphic
to a finite index subgroup in an irreducible $S$-arithmetic Chevalley group of
rank at least $2$ then the diameter of the closed geodesic norm on $\Gamma_x$
is finite.
\end{theorem}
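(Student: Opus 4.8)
The plan is to pass to the universal cover, read off an explicit description of the generating set $S$ from the geodesic symmetry, and then feed that description into bounded generation of higher rank $S$-arithmetic Chevalley groups.

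First I would lift everything to the universal cover $\widetilde M$ with a chosen lift $\widetilde x$ of $x$, so that $\Gamma=\pi_1(M,x)$ acts by deck transformations and the geodesic symmetry lifts to an isometric involution $\widetilde\sigma$ fixing $\widetilde x$ with $d\widetilde\sigma_{\widetilde x}=-\mathrm{id}$. Conjugation by $\widetilde\sigma$ defines an involutive automorphism $\theta\in\mathrm{Aut}(\Gamma)$. A smooth closed geodesic through $x$ lifts to the axis of an axial isometry $g\in\Gamma$ with $\widetilde x$ on that axis; since $\widetilde\sigma$ fixes $\widetilde x$ and reverses every geodesic through it, it reverses this axis, so $\theta(g)=\widetilde\sigma g\widetilde\sigma^{-1}=g^{-1}$. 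Conversely, any axial element whose axis passes through $\widetilde x$ lies in $S$. The consequence I want is a large explicit family inside $S$: for every $g\in\Gamma$ the symmetrisation $g\,\theta(g)^{-1}$ satisfies $\theta\bigl(g\,\theta(g)^{-1}\bigr)=\bigl(g\,\theta(g)^{-1}\bigr)^{-1}$ and is a transvection along the geodesic joining $\widetilde x$ to $g\widetilde x$, hence is axial with axis through $\widetilde x$; so $g\,\theta(g)^{-1}\in S$ and has closed geodesic norm at most $1$. In the Chevalley model $\theta$ is the Cartan involution and $g\,\theta(g)^{-1}$ is the analogue of $g g^{\mathsf T}$, a symmetric positive definite element.

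Next I would invoke the algebraic engine. After passing to a finite index subgroup and absorbing the finitely many cosets into a bounded error, it suffices to bound the closed geodesic norm uniformly on the root (elementary) generators $x_\alpha(t)$, because higher rank $S$-arithmetic Chevalley groups are boundedly generated by their root subgroups (Carter--Keller, Tavgen, Morris): there is a uniform $N$ expressing every element as a product of $N$ root elements. Combining this with the previous paragraph reduces the theorem to a single uniform estimate, namely $\|x_\alpha(t)\|_S\le C$ for all roots $\alpha$ and all admissible $t$.

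The hard part is precisely this bridge, since the geometric generators in $S$ are $\theta$-split (semisimple) whereas bounded generation is phrased through unipotents. To cross it I would work one root $\mathrm{SL}_2$ at a time: the symmetrisation $x_\alpha(t)\,\theta(x_\alpha(t))^{-1}=x_\alpha(t)x_{-\alpha}(\pm t)$ lies in $S$ and inside the rank one subgroup attached to $\alpha$, and I would peel off its lower unipotent factor using a bounded number of further elements of $S$ together with the higher rank commutator relations $[x_\alpha,x_\beta]=x_{\alpha+\beta}(\cdots)$, which are available exactly because the root system is irreducible of rank at least $2$. Carrying this out uniformly in $t$ and $\alpha$ is the main obstacle I anticipate; once it is settled, bounded generation delivers a uniform bound on $\|\cdot\|_S$ and hence a finite diameter.
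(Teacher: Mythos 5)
Your outline defers rather than proves the crucial step. You reduce the theorem to the uniform estimate $\|x_\alpha(t)\|_S\le C$ for root elements, and then write that carrying this out ``is the main obstacle I anticipate.'' That obstacle is the entire content of the theorem: the generating set $S$ is \emph{not} conjugation invariant, so the commutator-calculus and bounded-generation arguments that prove boundedness of conjugation-invariant (bi-invariant) norms on higher rank $S$-arithmetic Chevalley groups do not apply to $\|\cdot\|_S$ as they stand, and your sketch of ``peeling off the lower unipotent factor'' of $x_\alpha(t)x_{-\alpha}(\pm t)$ by a bounded number of elements of $S$ is exactly the part that has no argument. There is also a prior gap: your strategy presupposes that the root elements (at least for $t$ in a finite index subgroup) lie in $\Gamma_x$ at all, i.e.\ that $\Gamma_x$ has finite index in $\pi_1(M,x)$. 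This is genuinely not automatic --- in the paper's genus-$2$ surface example the analogous subgroup $\Gamma_x$ has \emph{infinite} index --- so without an argument for largeness of $\Gamma_x$, bounded generation of the ambient group says nothing about $\Gamma_x$, and the estimate you are after could even be vacuously false because $x_\alpha(t)\notin\Gamma_x$.

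Both gaps are closed by one observation you came close to but did not exploit. You noted that $g\,\theta(g)^{-1}\in S$ for every $g$; the paper uses the companion identity
\begin{equation*}
gsg^{-1} \;=\; \bigl(gs\,\theta(g)^{-1}\bigr)\cdot\bigl(\theta(g)\,g^{-1}\bigr),
\end{equation*}
valid for $s\in S$, in which \emph{both} factors lie in $S$. Hence every conjugate of an element of $S$ is a product of two elements of $S$, so $\Gamma_x$ is normal in $\pi_1(M,x)$ and $\|\cdot\|_S\le 2\|\cdot\|_{\overline S}$, where $\overline S=\bigcup_g gSg^{-1}$ generates a conjugation-invariant word norm. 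Normality feeds into Margulis' normal subgroup theorem: either $\Gamma_x$ is finite (and the diameter is trivially finite) or it has finite index. In the latter case one quotes the known theorem that \emph{every} conjugation-invariant norm on a finite index subgroup of an irreducible $S$-arithmetic Chevalley group of rank at least $2$ is bounded, and the domination $\|\cdot\|_S\le 2\|\cdot\|_{\overline S}$ finishes the proof. In other words, the correct move is not to redo bounded generation for the awkward set $S$, but to show that the $S$-norm is equivalent (up to a factor of $2$) to a conjugation-invariant norm and let the black-box result do the unipotent bookkeeping; also note that the paper's $S$ is the set $\{g:\theta(g)=g^{-1}\}$, identified with the closed-geodesic generators via uniqueness of geodesic representatives in nonpositive curvature, which is the clean substitute for your axial-isometry discussion.
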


Our proof of Theorem \ref{T:chevalley} amounts to showing that the closed
geodesic norm is bounded above by a conjugation invariant norm and then we use
the fact that such norms have finite diameter on S-arithmetic Chevalley groups
\cite{MR2819193,MR2819193-add}.  It would be interesting for find a direct
geometric argument which would prove a more general statement. 

\begin{conjecture}\label{conjecture}
Let $M=\Gamma \backslash G/K$ be a locally symmetric space of rank
at least $2$. If the lattice $\Gamma$ is invariant under the Cartan
involution then the diameter of the closed geodesic norm is finite.
\end{conjecture}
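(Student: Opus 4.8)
The plan is to run the same two-step strategy that proves Theorem \ref{T:chevalley}: first bound the closed geodesic norm above by a conjugation invariant norm, and then invoke boundedness of conjugation invariant norms in higher rank. The new geometric input is the identification of the generating set $S$ with the \emph{symmetric} elements of $\Gamma$. Writing $\theta$ for the Cartan involution, which by hypothesis preserves $\Gamma$, the geodesic symmetry through $x$ is the isometry of $M=\Gamma\backslash G/K$ induced by $\theta$, and the geodesics through $x=eK$ are the curves $t\mapsto \exp(tX)\cdot x$ with $X\in\F{p}$. First I would check that a closed local geodesic based at $x$ is exactly a transvection $\exp(X)\in\Gamma$ with $X\in\F{p}$, equivalently an element $s\in\Gamma$ with $\theta(s)=s^{-1}$; these I call symmetric. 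Thus $S$ is the set of symmetric elements of $\Gamma$, and I would record that $\Gamma_x=\langle S\rangle$ is $\theta$-invariant and of finite index in $\Gamma$, so that the boundedness results of \cite{MR2819193,MR2819193-add} are available to it in the Chevalley case.

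The heart of the argument is an algebraic identity making conjugates of symmetric elements short. For $s\in S$ and $\gamma\in\Gamma_x$ set $u=\theta(\gamma)\gamma^{-1}$ and $v=\gamma s\theta(\gamma)^{-1}$. A direct check using $\theta(s)=s^{-1}$ and $\theta(\Gamma_x)=\Gamma_x$ gives $\theta(u)=u^{-1}$ and $\theta(v)=v^{-1}$, so that $u,v\in S$, while $vu=\gamma s\theta(\gamma)^{-1}\theta(\gamma)\gamma^{-1}=\gamma s\gamma^{-1}$. Hence every conjugate of a generator is a product of two generators, and $\|\gamma s\gamma^{-1}\|_S\le 2$ for all $s\in S$ and $\gamma\in\Gamma_x$. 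This is the one place where the Cartan-involution invariance of the lattice is used, and it is what replaces the explicit Chevalley combinatorics of Theorem \ref{T:chevalley}.

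Next I would assemble the conjugation invariant norm. Define $\nu(g)=\sup_{\gamma\in\Gamma_x}\|\gamma g\gamma^{-1}\|_S$. By construction $\nu$ is conjugation invariant and $\nu\ge\|\cdot\|_S$; applying the identity factor by factor to a geodesic word $g=s_1\cdots s_n$ shows $\nu(g)\le 2\|g\|_S$, so $\nu$ is finite valued and bi-Lipschitz equivalent to $\|\cdot\|_S$. It then remains to see that the conjugation invariant norm $\nu$ has finite diameter. For $\Gamma_x$ commensurable with an irreducible $S$-arithmetic Chevalley group of rank at least $2$ this is exactly \cite{MR2819193,MR2819193-add}, recovering Theorem \ref{T:chevalley}; for a general higher rank lattice it is the assertion that every conjugation invariant norm on $\Gamma_x$ is bounded.

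The hard part will be precisely this last boundedness statement for an arbitrary higher rank lattice. It amounts to a strong boundedness property which, for general arithmetic groups, is tied to bounded generation by unipotents and is not known in full generality, so the conjecture is essentially equivalent to strong boundedness of the lattices in question. A secondary point requiring care is the geometric identification of the first step: one must verify that \emph{every} closed local geodesic based at $x$, including those failing to be smooth at the basepoint, corresponds to a symmetric element, since the geodesic symmetry manifestly reverses only smooth closed geodesics through $x$. Once strong boundedness of $\Gamma_x$ is in hand, the algebraic identity above delivers the conjecture with no further hypothesis beyond what that boundedness already demands.
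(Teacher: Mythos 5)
There is a genuine gap, and it is exactly where you acknowledge it: this statement is an open \emph{conjecture} in the paper, and your proposal does not close it. Your first three steps are correct but reproduce verbatim the machinery the paper already has: the identification of $S$ with the symmetric elements $\{s\in\Gamma:\theta(s)=s^{-1}\}$ is Lemma \ref{L:S=C} (valid here by Lemma \ref{L:geodesic-rep}, since $M$ is complete and nonpositively curved); your factorization $\gamma s\gamma^{-1}=\bigl(\gamma s\theta(\gamma)^{-1}\bigr)\bigl(\theta(\gamma)\gamma^{-1}\bigr)$ is precisely the identity in Lemma \ref{L:normal}; and your norm $\nu$ is the norm $\|\cdot\|_{\overline{S}}$ of Corollary \ref{C:domination}, up to the same factor of $2$. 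None of this is new, and none of it uses the rank hypothesis. The entire content of the conjecture sits in your final step, ``every conjugation invariant norm on $\Gamma_x$ is bounded,'' which for a general higher rank lattice is exactly the open problem the paper states in the paragraph following the conjecture (``it is not known whether conjugation invariant norms on lattices in Lie groups of rank at least $2$ have finite diameter''). Citing \cite{MR2819193,MR2819193-add} only covers the $S$-arithmetic Chevalley case, i.e., only recovers Theorem \ref{T:chevalley}, which is already proved in the paper. So what you have written is a reduction of the conjecture to a strictly stronger unsolved problem, not a proof.

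Moreover, your closing claim that the conjecture is ``essentially equivalent to strong boundedness of the lattices in question'' is an overstatement, and it points in the wrong direction. Your argument gives one implication only: boundedness of \emph{all} conjugation invariant norms on $\Gamma_x$ implies boundedness of the specific norm $\nu$, hence of $\|\cdot\|_S$. The converse is not established and should not be expected: the conjecture asserts finiteness of the diameter of a \emph{single} conjugation invariant norm, the word norm generated by $\overline{S}=\bigcup_{g}gSg^{-1}$, which is why the paper explicitly calls the conjecture ``a much weaker statement in this direction.'' A genuine attack on the conjecture would need to exploit the specific geometric nature of this generating set (for instance, that in the arithmetic model it contains all symmetric elements, in particular many unipotent-related elements), rather than appeal to a boundedness principle for arbitrary conjugation invariant norms.
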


An equivalent form of the above conjecture is that the diameter of the word
norm on $\Gamma$ associated with the generating set consisting of elements
invariant under the Cartan involution is finite.  More generally, it is not
known whether conjugation invariant norms on lattices in Lie groups of rank at
least $2$ have finite diameter.  A piece of evidence that their diameter may be
finite comes from the fact that such lattices do not admit unbounded
quasimorphisms. The above Conjecture \ref{conjecture} is a much weaker
statement in this direction.

\subsection{A comment on counting closed geodesics} 

Classically, counting closed geodesics is done in the form of estimates of the
number of geodesics of a given length \cite{MR630586}.  Here, we propose a
different way of counting. Namely, by measuring how big the subgroup of the
fundamental group generated by closed local geodesics is and whether it has
finite or infinite diameter with respect to the closed geodesic norm.
Finite diameter of the closed geodesic norm is interpreted as
abundance of closed local geodesics and infinite diameter as their scarcity.
For example, on a flat torus every element of the fundamental group is
represented by a closed local geodesic so $\Gamma_x=\pi_1(\B T^n,x)$ and the
norm has diameter one. On the other hand, on a hyperbolic punctured torus, we
have that $\Gamma_x=\pi_1(\B T^2\setminus\{0\})=\B F_2$ for a suitably chosen
basepoint and the closed geodesic norm is equivalent to the palindromic length
on the free group; see Example \ref{E:torus} for details.

\subsection{Examples}
\begin{example}[Hyperbolic punctured torus]\label{E:torus}

Let $(M,d)$ be the hyperbolic punctured torus viewed as a quotient of an ideal
hyperbolic square. Let the basepoint be represented by the centre of the square.
Then the generators of the fundamental group are represented by closed geodesics
(drawn in blue in the figure below).
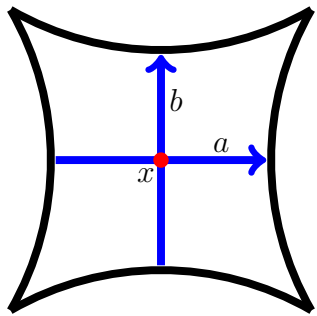
\begin{figure}[h]\label{F:}
\begin{tikzpicture}[line width=3pt, scale=0.2]
\draw (10,0) arc (60:120:20);
\draw (10,20) arc (-60:-120:20);
\draw (-10,0) arc (-30:30:20);
\draw (10,20) arc (150:210:20);
\draw[blue,->] (-7,10) -- (7,10);
\draw[blue,->] (0,3) -- (0,17);
\filldraw[red]
(0,10) circle (7pt);
\draw (-1,9) node {$x$};
\draw (1,14) node {$b$};
\draw (4,11) node {$a$};
\end{tikzpicture}
\caption{Punctured torus. }
\end{figure}
The central symmetry of the square defines a geodesic symmetry $I\colon M\to M$
such that it acts on the fundamental group $\pi_1(M,x) = \B F_2 = \langle a,b \rangle$
by inverting generators. It follows that closed local geodesics represent palindromes
in the free group $\B F_2$. Indeed, $I(w(a,b)) = w(a^{-1},b^{-1})$ is equal to
$w(a,b)^{-1}$ if and only if the reduced word $w(a,b)$ is a palindrome.
Thus the closed geodesic norm is equal to the {\em palindromic length} on
$\B F_2$ and this is known to have infinite diameter \cite{MR2125453},
\cite[Example 6.7]{MR3730755}.
\hfill $\diamondsuit$
\end{example}

\begin{example}[Hyperbolic closed surface I]\label{E:closed-surface}

Let $\Sigma$ be a closed hyperbolic surface of genus $g$ obtained as a quotient
of a regular hyperbolic $4g$-gon in which the opposite sides are identified and
with its centre representing the basepoint. As in the case of the punctured
torus the central symmetry defines a geodesic symmetry which is the
hyperelliptic involution.  Also in this case $\Gamma_x=\pi_1(\Sigma,x)$. It
follows from Theorem~\ref{T:negative} that the diameter of the closed geodesic
norm is infinite.  \hfill $\diamondsuit$

\end{example}

\begin{example}[Hyperbolic closed surface II]\label{E:genus=2}
Let $\Sigma$ be a closed hyperbolic surface of genus $2$ obtained as
a quotient of a regular hyperbolic octagon with identifications
which yield the following presentation of the fundamental group
$$
\pi_1(\Sigma,x) = 
\left\langle a,b,c,d \ |\ [a,b][c,d]=1 \right\rangle.
$$
As before the basepoint is represented by the centre of the octagon
and its central symmetry descends to a geodesic symmetry $I$ of 
$\Sigma$. Observe, that the homomorphism induced by $I$ on
the first homology is defined by $I_*[a] = [c]$ and $I_*[b]=[d]$.
Thus the subspace of $H_1(\Sigma;\B R)$ consisting of
elements such that $I_*(z)=-z$ is $2$-dimensional
generated by $[a]-[c]$ and $[b]-[d]$. In particular, the
subgroup $\Gamma_x\subseteq \pi_1(\Sigma,x)$ is infinite and of infinite index.
It is not difficult to see that is  also nonabelian.
Thus it follows from Theorem \ref{T:negative} that
the closed geodesic norm on $\Gamma_x$ has infinite diameter.
\hfill $\diamondsuit$
\end{example}

\begin{example}[Closed hyperbolic $3$-manifold]\label{E:hyperbolic-3d}

Let $D\subseteq \B H^3$ be a right-angled regular hyperbolic dodecahedron and
let $W\subset \OP{Iso}(\B H^3)$ be the right-angled Coxeter group of isometries
of the hyperbolic space generated by the reflections in the faces of $D$. Let
$\Pi$ be the kernel of the homomorphism $W\to (\B Z/2\B Z)^6$ which sends
reflections through the opposite faces to the same generator.  
Then $M= \B H^3/\Pi$ is a closed hyperbolic manifold glued from $2^6$
dodecahedra. Thus $\Pi=\pi_1(M,x)$. Moreover, the geodesic symmetry 
$I\colon \B H^3\to \B H^3$ at the centre of the dodecahedron $x\in D$
descends to a geodesic symmetry of $M$.  To see this observe that if $s\in W$
is a generator then $IsI=s'$, where $s'\in W$ is the reflection in the face of
$D$ opposite to the face of reflection $s$. This means that the conjugation by
$I$ preserves $W$ and, moreover, $Iss'I = s's = (ss')^{-1}$. Since conjugates
of the six elements $ss'$ by elements of $W$ generate $\Pi$ we obtain that the
conjugation by $I$ preserves $\Pi$ and that $\Gamma_x = \pi_1(M,x)$.
It follows from Theorem \ref{T:negative} that the closed geodesic norm
in $\pi_1(M,x)$ has infinite diameter.
\hfill $\diamondsuit$
\end{example}

\begin{example}\label{E:reid}
Chinburg and Reid \cite{MR1243786} proved that there are infinitely many
noncomensurable examples of closed hyperbolic $3$-manifolds in which all
closed geodesics are simple. Let $M$ be such a manifold. 
It follows that $M$ cannot admit a geodesic symmetry through a point
$x\in M$ contained in a closed geodesic. For if $I\colon M\to M$
was a geodesic symmetry through a point $x\in \gamma$, where $\gamma$
is a geodesic segment with endpoints at $x$ then $\gamma*I(\gamma)$ would
be a closed geodesic with a self-intersection at $x$. 

Furthermore, Jones and Reid \cite{MR1458971} proved later that if two closed
geodesics in $M$ intersect then they are perpendicular. They moreover, proved
that $M$ has points at which at least two closed geodesic intersect. Let $x\in
M$ be such a point. It follows that at most three closed geodesics can
intersect at $x$ for dimensional reasons and hence the group $\Gamma_x$
is finitely generated.
\hfill $\diamondsuit$
\end{example}

\begin{example}[Locally symmetric space of higher rank]\label{E:locally-symmetric}
Let $G$ be a non-compact semisimple Lie group, $K\subset G$ its maximal
compact subgroup and $\Gamma\subset G$ a lattice. If $I\colon G\to G$
is a Cartan involution preserving the lattice (setwise) then
it descends to a geodesic symmetry of the locally symmetric
space $\Gamma\backslash G/K$. 

Let $\Gamma\subseteq \OP{SL}(n,\B Z)$ be a finite index subgroup
so that the locally symmetric space 
$M=\Gamma\backslash \OP{SL}(n,\B R)/\OP{SO}(n)$
is a manifold. The geodesic symmetry is given by the inverse-transpose
and hence the closed geodesics represent symmetric matrices
of $\Gamma$. If $n=2$ then the space is hyperbolic and
the closed geodesic norm has infinite diameter. If $n>2$ then
$\OP{SL}(n,\B Z)$ is an arithmetic Chevalley group of rank
at least $2$ and it follows from Theorem \ref{T:chevalley} that
the diameter of the closed geodesic norm is finite. Observe
that in this case the group $\Gamma_x$ is infinite.
\hfill $\diamondsuit$
\end{example}

\subsection*{Acknowledgements} This work was partly funded by the Leverhulme
Trust Research Project Grant RPG-2017-159. MM is supported by the grant
Sonatina 2018/28/C/ST1/00542 funded by Narodowe Centrum Nauki.
MM and JK were partially supported by SFB 1085 ``Higher Invariants''
funded by Deutsche Forschungsgemeinschaft.

\section{Definitions and supporting results}\label{S:definitions}

\subsection{Geodesics}
We use terminology from \cite{MR1744486}. Let $(M,d)$ be a metric space and let
$x,y\in M$.  A map $\gamma\colon [a,b]\to M$ is called a {\em geodesic from $x$
to $y$} if $d(\gamma(s),\gamma(t))=|s-t|$ for every $s,t\in [a,b]$ and
$\gamma(a)=x$ and $\gamma(b)=y$. The image of such $\gamma$ is called a {\em
geodesic segment}. A {\em local geodesic} is a map $\gamma\colon [a,b]\to M$
such that for every $c\in [a,b]$ there exists an $\epsilon>0$ such
that $d(\gamma(s),\gamma(t))=|s-t|$ for every $s,t\in [c-\epsilon,c+\epsilon]$.
A metric space $(M,d)$ is called a {\em geodesic metric space} if every two
points of $M$ can be joined by a geodesic. A complete connected Riemannian manifold
is a geodesic metric space.

Let $(\B S^1,g)$ denote a circle with the standard metric of total length $2\pi$.
A (locally) isometric embedding $c\colon \B S^1\to M$ is called a {\em
closed (local) geodesic}. 
If $\gamma\colon [a,b]\to M$ is a path then its {\em reverse}
$\overline{\gamma}\colon [a,b]\to M$ is defined by
$\overline{\gamma}(t)=\gamma(a+b-t)$. We define similarly the reverse of a loop
$\gamma\colon \B S^1\to M$.
A {\em geodesic symmetry} $I\colon M\to M$ at $x\in M$ is an isometry such
that $(I\circ \gamma)(t) = \gamma(-t)=\overline{\gamma}(t)$ 
for every geodesic $\gamma\colon [-a,a]\to M$ such that $\gamma(0)=x$.

\begin{lemma}\cite[Theorem 4.13, Chapter II.4]{MR1744486}\label{L:geodesic-rep}
Let $(M,d)$ be a complete, non-positively curved Riemannian manifold with
a basepoint $x\in M$. Then every element $g\in \pi_1(M,x)$ is
represented by a unique local geodesic $\gamma\colon [0,a]\to M$.
\end{lemma}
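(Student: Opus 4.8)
The plan is to work in the universal cover and exploit the fact that non-positive curvature makes it a uniquely geodesic space. Let $p\colon \widetilde{M}\to M$ be the universal covering and fix a lift $\widetilde{x}$ of the basepoint. Since $M$ is complete and non-positively curved, the Cartan--Hadamard theorem tells us that $\widetilde{M}$ is a complete, simply connected, non-positively curved Riemannian manifold, hence a $\mathrm{CAT}(0)$ space. Two properties of $\mathrm{CAT}(0)$ spaces will be used throughout: any two points are joined by a \emph{unique} geodesic, and every local geodesic is in fact a (globally minimizing) geodesic.

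For existence I would identify $\pi_1(M,x)$ with the group $\Gamma$ of deck transformations acting on $\widetilde{M}$, so that an element $g$ corresponds to the point $g\cdot\widetilde{x}$ in the orbit of $\widetilde{x}$. Let $\widetilde{\gamma}\colon[0,a]\to\widetilde{M}$ be the unique geodesic from $\widetilde{x}$ to $g\cdot\widetilde{x}$. Because $p$ is a local isometry, the composite $\gamma=p\circ\widetilde{\gamma}$ is a local geodesic in $M$; its endpoints $p(\widetilde{x})$ and $p(g\cdot\widetilde{x})$ both equal $x$, so $\gamma$ is a loop based at $x$, and by construction its homotopy class is $g$.

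For uniqueness I would suppose $\gamma_1,\gamma_2\colon[0,a_i]\to M$ are local geodesic loops at $x$ both representing $g$, and lift each to a path $\widetilde{\gamma}_i$ in $\widetilde{M}$ starting at $\widetilde{x}$. Since $p$ is a local isometry, each lift $\widetilde{\gamma}_i$ is a local geodesic, hence by the property recalled above a genuine geodesic. As $\gamma_1$ and $\gamma_2$ are homotopic rel basepoint, the homotopy lifting property forces their lifts to share endpoints; in particular both run from $\widetilde{x}$ to the common point $g\cdot\widetilde{x}$. By uniqueness of geodesics in a $\mathrm{CAT}(0)$ space, $\widetilde{\gamma}_1=\widetilde{\gamma}_2$, and projecting back yields $\gamma_1=\gamma_2$.

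The one step that genuinely requires the curvature hypothesis --- and which I expect to be the crux --- is the assertion that a local geodesic in $\widetilde{M}$ is a global geodesic. In positive or general curvature this fails (a great-circle arc of length exceeding $\pi$ on a sphere is a local but not a global geodesic), and it is precisely the convexity of the distance function under non-positive curvature that rules out such behaviour. Once this is in hand, both existence and uniqueness follow formally from covering-space theory and the uniqueness of $\mathrm{CAT}(0)$ geodesics.
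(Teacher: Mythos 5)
Your proof is correct: the paper does not prove this lemma at all, but simply cites it as Theorem 4.13 of Chapter II.4 in Bridson--Haefliger, and your argument --- Cartan--Hadamard, the fact that local geodesics in a $\mathrm{CAT}(0)$ space are global geodesics, and covering-space theory to match up endpoints of lifts --- is precisely the standard proof of that cited theorem. You also correctly single out the local-to-global property of geodesics as the one step where non-positive curvature is genuinely used.
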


\begin{lemma}\cite[Theorem 3.8.14]{MR1330918}\label{L:closed-geodesic}
If $(M,d)$ is closed Riemannian manifold of negative curvature then
every free homotopy class of loops is represented by a unique closed
local geodesic. In particular, each conjugacy class in $\pi_1(M,x)$ is
represented by a unique closed local geodesic.
\end{lemma}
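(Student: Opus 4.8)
The plan is to pass to the universal cover and realise the closed geodesic as the projection of an axis of a deck transformation. Write $\widetilde M$ for the universal cover of $M$ with its lifted metric; by the Cartan--Hadamard theorem $\widetilde M$ is a Hadamard manifold (complete, simply connected, non-positively curved, hence CAT(0)), and $\Gamma=\pi_1(M,x)$ acts on it by deck transformations, freely, properly discontinuously, and cocompactly. Free homotopy classes of loops in $M$ correspond bijectively to conjugacy classes in $\Gamma$, so it suffices to show that every nontrivial $g\in\Gamma$ admits a $g$-invariant geodesic line (an \emph{axis}) in $\widetilde M$ which is unique, and that conjugate elements have axes differing by a deck transformation. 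The image of such a line in $M$ is then the desired closed local geodesic, and its dependence only on the conjugacy class yields the final sentence.

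For existence I would study the displacement function $d_g\colon\widetilde M\to\B R$ given by $d_g(p)=d(p,gp)$, which is convex because $\widetilde M$ is CAT(0). Its infimum $\tau(g)$ is strictly positive: a zero of $d_g$ would be a fixed point of $g$, which cannot occur since $\Gamma$ acts freely. Moreover $\tau(g)$ is attained, by a standard compactness argument: a minimising sequence can be moved into a fixed compact fundamental domain by deck transformations, and discreteness of the action then forces, after passing to a subsequence and replacing $g$ by a conjugate, convergence to an honest minimiser $p_0$. Thus $g$ is a hyperbolic (axial) isometry, the geodesic determined by $p_0$ and $gp_0$ extends to a complete geodesic line $\gamma$ that $g$ translates by $\tau(g)$, and the image of $\gamma$ in $M$ is a closed local geodesic in the free homotopy class of $g$.

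Uniqueness is where strict negativity of the curvature enters. In non-positive curvature the minimum set of the displacement function is a convex, $g$-invariant subset which, by the flat strip theorem, splits as a metric product of an axis with a convex cross-section; two distinct axes would therefore span a flat strip in $\widetilde M$. Strict negative curvature forbids flat strips (equivalently, $d_g$ is strictly convex transverse to any axis), so the minimum set is a single geodesic line and the axis of $g$ is unique. Replacing $g$ by a conjugate $hgh^{-1}$ replaces its axis by the $h$-translate of that axis, which projects to the same closed geodesic in $M$; hence the closed local geodesic depends only on the conjugacy class, as required.

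I expect the main obstacle to be the existence half, specifically verifying that $g$ is hyperbolic rather than parabolic, that is, that $\tau(g)$ is strictly positive and genuinely attained. This is exactly where the standing hypotheses are used: freeness of the action gives positivity, while cocompactness together with discreteness gives attainment, so the classification of isometries of Hadamard manifolds collapses to the axial case. Once this is in hand the uniqueness half is a clean consequence of the flat strip theorem in the strictly negatively curved setting, and the translation to conjugacy classes is formal.
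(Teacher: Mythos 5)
The paper does not actually prove this lemma: it is quoted verbatim from the literature (Klingenberg's \emph{Riemannian Geometry}, the cited Theorem 3.8.14), so there is no in-paper argument to compare against, and your proposal supplies a proof where the paper has only a citation. What you give is the standard Hadamard-manifold argument: deck transformations of the universal cover are semisimple because the action is cocompact and properly discontinuous, freeness rules out elliptic elements, so every nontrivial $g$ is hyperbolic with an axis, and the flat strip theorem plus strict negativity of curvature makes the axis unique. This is correct, and your self-diagnosis of where the hypotheses enter (cocompactness for attainment, freeness for positivity, strict negativity only for uniqueness) is accurate; note only that pointwise positivity of $d_g$ by itself does not bound the infimum away from zero --- it is precisely the attainment argument that converts pointwise positivity into $\tau(g)>0$, so the two sentences of your existence paragraph must be read in the reverse order.

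The one step you leave implicit deserves to be made explicit, since without it your uniqueness argument proves less than the lemma asserts. Uniqueness of the axis of $g$, together with the observation that conjugation translates axes, shows only that your \emph{construction} (conjugacy class $\mapsto$ projected axis) is well defined; it does not yet exclude a second closed local geodesic in the same free homotopy class arising some other way. You need the converse lifting statement: any closed local geodesic $c\colon \B S^1\to M$ in the class of $g$ lifts to a complete local geodesic line in $\widetilde M$, which is a genuine geodesic line because local geodesics in a CAT(0) (or Hadamard) space are global geodesics, and which is translated by the deck transformation determined by $c$, i.e.\ is an axis of a conjugate of $g$. Only then does uniqueness of axes force $c$ to coincide, up to rotation of $\B S^1$, with the geodesic you constructed. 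This is a routine two-line addition given your setup, so I would call it an omission rather than a flaw in the approach, but the reduction stated in your first paragraph (``it suffices to show\dots'') is not justified without it.
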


\subsection{Quasimorphisms and norms on groups}
Let $G$ be a group. A function $\psi\colon G\to \B R$ is called
a {\em quasimorphism} if there exists $D\geq 0$ such that
$$
|\psi(g) - \psi(gh) + \psi(h)| \leq D,
$$
for every $g,h\in G$. The smallest number $D$ with the above
property is called the {\em defect} of $\psi$. If $\psi(g^n)=n\psi(g)$
for every $n\in \B Z$ and every $g\in G$ then $\psi$ is called
{\em homogeneous}; see \cite{MR2527432} for background on quasimorphisms.

A function $\nu\colon G\to \B R$ such that for all $g,h\in G$:
\begin{itemize}
\item $\nu(g)\geq 0$,
\item $\nu(g) = 0 $ if and only if $g=1$,
\item $\nu(gh)\leq \nu(g)+\nu(h)$
\end{itemize}
is called a {\em norm} on a group $G$. If in addition
$\nu(hgh^{-1})=\nu(g)$ then $\nu$ is called {\em conjugation invariant}.
The supremum $\nu(G)=\sup\{\nu(g)\ |\ g\in G\}$ is called the
{\em diameter} of $\nu$ or the diameter of $G$ with respect to $\nu$.
If $\nu(G)=\infty$ then $\nu$ is called {\em unbounded}.

The number $\tau(g) = \lim_{n\to \infty}\frac{\nu(g^n)}{n}$ is called
the {\em translation length} of $g$ with respect to the norm $\nu$.
If a group $G$ contains an element with positive translation length
with respect to the norm $\nu$ then $\nu$ is called {\em stably unbounded}.

\begin{example}\label{E:}
Let $\Sigma_{\infty}$ be an infinite symmetric group. That is, a group 
of finitely supported bijections of a countably infinite set. The cardinality
of the support defines a conjugation invariant norm of infinite diameter
in which every element has translation length equal to zero. 
\hfill $\diamondsuit$
\end{example}

\section{Proofs}\label{S:proofs}
Let $(M,d)$ be a complete Riemannian manifold. Let $I\colon M\to M$
be a geodesic symmetry through $x\in M$. By an abuse of notation we denote the
induced automorphism of the fundamental group by $I\colon \pi_1(M,x)\to
\pi_1(M,x)$.  Define the following two subsets of the fundamental group of $M$:
\begin{align*}
S &= \left\{ g\in \pi_1(M,x) \ |\ I(g) = g^{-1}\right\}\\
C &= \left\{ g\in \pi_1(M,x) \ |\ g = [c], \ c\in C_x\right \}.
\end{align*}
Recall that $C_x$ denotes the set of all closed local geodesics through 
$x\in~M$.

\begin{lemma}\label{L:S=C}
If every element of $\pi_1(M,x)$ has a unique geodesic representative then 
$S = C$.
\end{lemma}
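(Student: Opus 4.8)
The plan is to prove the two inclusions $C \subseteq S$ and $S \subseteq C$ separately, in both cases passing through the local geodesic representative of a class and translating the algebraic condition $I(g)=g^{-1}$ into the geometric condition that this representative closes up smoothly at $x$. Throughout I will use two elementary facts. First, since $I$ is an isometry, $I\circ\gamma$ is a local geodesic whenever $\gamma$ is, and it represents $I([\gamma])$. Second, the differential of a geodesic symmetry at its centre is $dI_x=-\mathrm{id}$ on $T_xM$: differentiating the defining relation $(I\circ\gamma)(t)=\gamma(-t)$ at $t=0$ gives $dI_x(\gamma'(0))=-\gamma'(0)$, and every tangent vector at $x$ arises as some $\gamma'(0)$. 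I would also record that the reversal relation extends from geodesics to local geodesics through $x$: if $\eta$ is a local geodesic with $\eta(0)=x$, then $I\circ\eta$ and $t\mapsto\eta(-t)$ are local geodesics agreeing near $t=0$ (by the definition applied to a short geodesic subarc), hence everywhere by uniqueness of geodesic extension, so $I\circ\eta=\overline\eta$.

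For $C\subseteq S$, take $g=[c]$ with $c\in C_x$ a closed local geodesic based at $x$. Reparametrising so that $c$ starts and ends at $x$, the fact that $c$ is a \emph{closed} geodesic means it passes smoothly through $x$, i.e. its initial and terminal velocities agree. I would then compare $I\circ c$ with the reverse $\overline c$: both are closed local geodesics issuing from $x$, and from $(I\circ c)'(0)=dI_x(c'(0))=-c'(0)=\overline c{}'(0)$ they share initial point and velocity, so by uniqueness $I\circ c=\overline c$. Passing to homotopy classes yields $I(g)=[I\circ c]=[\overline c]=g^{-1}$, so $g\in S$.

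For $S\subseteq C$, this is where the hypothesis is essential. Let $g\in S$ and let $\gamma\colon[0,a]\to M$ be the unique local geodesic representing $g$ furnished by the hypothesis. Then $I\circ\gamma$ is a local geodesic representing $I(g)=g^{-1}$, and $\overline\gamma$ is a local geodesic representing $g^{-1}$ as well; by \emph{uniqueness} of the geodesic representative these must coincide, $I\circ\gamma=\overline\gamma$. Differentiating this identity at $t=0$ gives $-\gamma'(0)=dI_x(\gamma'(0))=(I\circ\gamma)'(0)=\overline\gamma{}'(0)=-\gamma'(a)$, whence $\gamma'(0)=\gamma'(a)$. This is exactly the condition that the loop $\gamma$ close up smoothly at $x$, so $\gamma$ is a closed local geodesic and $g=[\gamma]\in C$.

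The main obstacle is the subtlety at the basepoint: the unique representative of $g$ is a priori only a geodesic \emph{loop}, which may have a corner at $x$, whereas being a \emph{closed} geodesic requires the stronger matching $\gamma'(0)=\gamma'(a)$. The entire content of the lemma is that $I(g)=g^{-1}$ forces this matching, and the only tool converting the algebraic identity into the velocity condition is the uniqueness of geodesic representatives together with $dI_x=-\mathrm{id}$; this is precisely why the hypothesis cannot be dropped. A secondary point to handle carefully is the degenerate class $g=1$ and the bookkeeping in reparametrising a closed geodesic to be based at $x$, but these are routine.
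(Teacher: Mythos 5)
Your proof is correct and follows essentially the same route as the paper: for $C\subseteq S$ you match $I\circ c$ with $\overline{c}$ via their common initial data and uniqueness of geodesic extension, and for $S\subseteq C$ you invoke the uniqueness hypothesis to force $I\circ\gamma=\overline{\gamma}$ and conclude the loop closes up smoothly. The only difference is that you make explicit (via $dI_x=-\mathrm{id}$ and the velocity computation $\gamma'(0)=\gamma'(a)$) what the paper leaves implicit, which is a welcome clarification rather than a divergence.
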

\begin{proof}
If $c$ is a closed local geodesic through $x$, then $\bar{c}$ and $I(c)$ 
have a common initial segment. Thus, by the uniqueness of extension of
geodesics, $\bar{c} = I(c)$ and $I[c] = [c]^{-1}$.
This proves that $C\subseteq S$.

Let $\gamma$ be a local geodesic representing $s\in S$. Since 
$I(s) = s^{-1}$ is, on the one hand, represented by a geodesic
segment $I\circ \gamma$ and, on the other hand, by a local geodesic
$\overline{\gamma}$, we get that $I\circ \gamma = \overline{\gamma}$
due to the uniqueness of geodesic representatives. This implies that
$\gamma \in C_x$ and hence $s\in C$.
\end{proof}

\begin{remark}
Observe that it is important here that $M$ is a manifold. More precisely,
that a geodesic is uniquely determined by its initial segment. For example,
the graph presented on the figure below admits a geodesic
symmetry through $x$ but its closed geodesics going around one of the squares
are not preserved setwise.
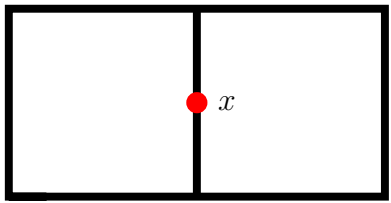
\begin{figure}[h]\label{F:}
\begin{tikzpicture}[line width=3pt, scale=0.5]
\draw (0,0) -- (10,0) -- (10,5) -- (0,5) -- (0,0) -- (1,0);
\draw (5,0) -- (5,5);
\filldraw[red]
(5,2.5) circle (5pt);
\draw (5.8,2.5) node {$x$};
\end{tikzpicture}
\caption{Warning example. }
\end{figure}
\end{remark}

\begin{lemma}\label{L:normal}
If every element of $\pi_1(M,x)$ has a unique local geodesic representative then 
the subgroup $\Gamma_x\subseteq \pi_1(M,x)$ is normal.
\end{lemma}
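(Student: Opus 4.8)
The plan is to leverage Lemma~\ref{L:S=C}, which under the stated hypothesis identifies the generating set of $\Gamma_x$ with $S = \{g \in \pi_1(M,x) \mid I(g) = g^{-1}\}$, so that $\Gamma_x = \langle S\rangle$. Since conjugation by any fixed $h$ is an automorphism of $\pi_1(M,x)$, to prove normality it suffices to check that $hsh^{-1} \in \Gamma_x$ for every $s \in S$ and every $h \in \pi_1(M,x)$; the general case then follows because an automorphism sends a word in the generators to the corresponding word in their conjugates.

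I would first record that the induced map $I$ on $\pi_1(M,x)$ is an involution. Geometrically $I(x)=x$ and the differential of a geodesic symmetry at its centre is $-\mathrm{id}$, so $d(I^2)_x = \mathrm{id}$; as an isometry of a connected Riemannian manifold is determined by its $1$-jet at a point, $I^2 = \mathrm{id}$ on $M$ and hence $I^2 = \mathrm{id}$ on $\pi_1(M,x)$. This is the only additional input needed, and it is routine.

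The heart of the argument is to produce enough elements of $S$. Two computations, each using $I^2=\mathrm{id}$ and (for the second) the relation $I(s)=s^{-1}$, do the job: for every $h$ the element $hI(h)^{-1}$ lies in $S$, since $I(hI(h)^{-1}) = I(h)h^{-1} = (hI(h)^{-1})^{-1}$; and for every $s\in S$ and every $h$ the element $I(h)sh^{-1}$ lies in $S$, since $I(I(h)sh^{-1}) = hs^{-1}I(h)^{-1} = (I(h)sh^{-1})^{-1}$. Both verifications are short.

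These two facts combine through the factorization
$$
hsh^{-1} = \bigl(hI(h)^{-1}\bigr)\bigl(I(h)sh^{-1}\bigr),
$$
which writes an arbitrary conjugate of a generator as a product of two elements of $S$, hence as an element of $\langle S\rangle = \Gamma_x$. This gives normality. I expect the only real obstacle to be spotting this factorization: the naive candidate $hsh^{-1}$ is typically not itself in $S$, and the trick is to insert $I(h)^{-1}I(h)$ so that the conjugating element is split into one piece twisted by $I$ on the left and one on the right, each of which then satisfies the defining relation of $S$.
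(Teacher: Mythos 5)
Your proof is correct and takes essentially the same approach as the paper: the paper's proof factors $gsg^{-1} = \bigl(gsI(g^{-1})\bigr)\cdot\bigl(I(g)g^{-1}\bigr)$, which is just the mirror image of your factorization, obtained by inserting $I(h^{-1})I(h)$ on the other side of $s$. Your explicit verification that the induced map satisfies $I^2 = \mathrm{id}$ (via the $1$-jet rigidity of isometries) is used implicitly in the paper's computation, so spelling it out is a reasonable addition rather than a deviation.
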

\begin{proof}
According to Lemma \ref{L:S=C} the subgroup $\Gamma_x$ is generated by
the set $S$. Let $s\in S$ and let $g\in \pi_1(M,x)$. Then
$$
gsg^{-1} = gsI_x(g^{-1}) \cdot I_x(g) g^{-1},
$$
which means that the conjugate of an element $s\in S$ is a product of
two elements from $S$.
\end{proof}

\begin{corollary}\label{C:domination}
If every element of $\pi_1(M,x)$ has a unique local geodesic representative then 
the closed geodesic norm on $\Gamma_x$ is dominated by a norm invariant
with respect to the conjugation action of $\pi_1(M,x)$.
\end{corollary}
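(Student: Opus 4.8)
Denote by $\nu_S$ the closed geodesic norm, that is, the word norm on $\Gamma_x$ associated with the generating set $S$ (which equals $C$ by Lemma \ref{L:S=C} and generates $\Gamma_x$). The plan is to construct from $\nu_S$ an explicit norm $\nu'$ on $\Gamma_x$ that is invariant under the conjugation action of the whole group $\pi_1(M,x)$ and satisfies $\nu_S\le\nu'$. Since $\Gamma_x$ is normal in $\pi_1(M,x)$ by Lemma \ref{L:normal}, conjugation by any $g\in\pi_1(M,x)$ preserves $\Gamma_x$, so $\nu_S(g w g^{-1})$ is defined for all $w\in\Gamma_x$ and $g\in\pi_1(M,x)$, and I may set
$$\nu'(w)=\sup_{g\in\pi_1(M,x)}\nu_S\!\left(g w g^{-1}\right).$$

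The first and only substantive step is to show that this supremum is finite, which amounts to a uniform bound on how much conjugation can distort $\nu_S$. For this I would reuse the identity from the proof of Lemma \ref{L:normal}: for $s\in S$ and $g\in\pi_1(M,x)$ one has $g s g^{-1}=\big(g s I(g^{-1})\big)\cdot\big(I(g)g^{-1}\big)$ with both factors in $S$, so $\nu_S(g s g^{-1})\le 2$. Writing a general $w\in\Gamma_x$ as a product $w=s_1\cdots s_n$ of $n=\nu_S(w)$ elements of $S$ and distributing the conjugation as $g w g^{-1}=\prod_{i=1}^{n}\big(g s_i g^{-1}\big)$, subadditivity of $\nu_S$ gives $\nu_S(g w g^{-1})\le 2\nu_S(w)$ uniformly in $g$. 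Hence $\nu'(w)\le 2\nu_S(w)<\infty$, and the supremum is in fact a maximum of integers.

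It then remains to verify the formal properties of $\nu'$. Taking $g=1$ gives $\nu_S\le\nu'$, which is both the required domination and, together with nonnegativity, the nondegeneracy $\nu'(w)=0\Rightarrow w=1$. Subadditivity follows by bounding $\nu_S(g w_1 w_2 g^{-1})\le\nu_S(g w_1 g^{-1})+\nu_S(g w_2 g^{-1})\le\nu'(w_1)+\nu'(w_2)$ for each $g$ and then taking the supremum. Conjugation invariance under $\pi_1(M,x)$ is built into the definition: for $h\in\pi_1(M,x)$ the substitution $g\mapsto gh$ reindexes the supremum, giving $\nu'(hwh^{-1})=\nu'(w)$.

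The only point requiring care is the finiteness of the supremum, i.e. the factor-two distortion estimate; everything else is routine formal verification. That estimate is entirely algebraic and rests on the fact, recorded in Lemma \ref{L:normal}, that each conjugate of a generator is a product of exactly two generators, with the bound uniform in the conjugating element. With this in place, $\nu'$ is the desired $\pi_1(M,x)$-conjugation invariant norm dominating the closed geodesic norm.
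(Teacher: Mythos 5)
Your proof is correct, but your construction of the invariant norm is genuinely different from the paper's. The paper enlarges the generating set: it sets $\overline{S}=\bigcup_{g}gSg^{-1}$, observes that the word norm $\|\cdot\|_{\overline S}$ is conjugation invariant simply because $\overline S$ is a conjugation-invariant set, and then proves the domination $\|w\|_S\le 2\|w\|_{\overline S}$ by expanding a shortest $\overline S$-word $w=s_1^{g_1}\cdots s_n^{g_n}$ and replacing each conjugated generator by two elements of $S$. You instead symmetrize the norm itself, defining $\nu'(w)=\sup_g\nu_S\bigl(gwg^{-1}\bigr)$, so that invariance comes for free from reindexing the supremum, while the substantive step shifts to proving the supremum is finite. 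Both arguments hinge on exactly the same identity from Lemma \ref{L:normal}, namely $gsg^{-1}=\bigl(gsI(g^{-1})\bigr)\cdot\bigl(I(g)g^{-1}\bigr)$ with both factors in $S$, and both yield the same factor-two control: the paper gets $\|\cdot\|_{\overline S}\le\|\cdot\|_S\le 2\|\cdot\|_{\overline S}$, you get $\nu_S\le\nu'\le 2\nu_S$. The paper's route is slightly more economical, since invariance and finiteness of $\|\cdot\|_{\overline S}$ are automatic and only the domination inequality needs proof, whereas you must verify the norm axioms for $\nu'$; on the other hand, your sup-symmetrization is a more general device, as it converts any norm on a normal subgroup with uniformly bounded conjugation distortion into an invariant one, without requiring the norm to be a word norm.
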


\begin{proof}
Let $\overline{S} = \bigcup_{g\in \pi_1(M,x)} g S g^{-1}$. Since $\overline{S}$ is
invariant under conjugations by elements of $\pi_1(M,x)$ the associated
word norm is invariant under the conjugation action by $\pi_1(M,x)$.

Let $\|g\|_{\overline{S}}=n$. This means that $g=s_1^{g_{1}}\cdots s_n^{g_{n}}$,
where $s_i\in S$ and $g_i\in \pi_1(M,x)$.
It follows from Lemma \ref{L:normal} that
$$
\|g\|_S = \|s_1^{g_{1}}\cdots s_n^{g_{n}}\|_S \leq 2 n = 2\|g\|_{\overline{S}}.
$$
\end{proof}

\begin{proof}[Proof of Theorem \ref{T:chevalley}]
If $\Gamma_x$ is a finite group then there is nothing to prove. So assume that
$\Gamma_x$ is infinite. Since it is normal in $\pi_1(M,x)$, it is of finite 
index, according to \cite[(5.3) Proposition, p.324]{MR1090825}. 
If follows from \cite{MR2819193,MR2819193-add}
that every conjugation invariant norm on a finite index subgroup of an
S-arithmetic Chevalley group of rank at least $2$ has finite diameter
and hence the statement follows from Corollary \ref{C:domination}.
\end{proof}

\begin{proof}[Proof of Theorem \ref{T:negative}]
Let $\alpha\in \Omega^1(M)$ be a differential $1$-form and let
$\psi_{\alpha}\colon \pi_1(M,x)\to \B R$ be defined by
$$
\psi_{\alpha}(g) = \int_{\gamma} \alpha,
$$
where $\gamma$ is a closed local geodesic representing the conjugacy class of $g$.
The map $\psi_{\alpha}$ is a homogeneous quasi-morphism, 
see \cite[Example 2.3.1]{MR2527432}.

If $\alpha$ is such that $I^*(\alpha) = \alpha$ then $\psi_{\alpha}$ vanishes
on $S$. Indeed,
\begin{align*}
\psi_{\alpha}(s) &= \int_{c} \alpha = \int_{I\circ \overline{c}}\alpha\\
&= \int_{\overline{c}} I^*(\alpha) = -\int_{c} \alpha = -\psi_{\alpha}(s),
\end{align*}
which implies that $\psi_{\alpha}(s) = 0$.

Let $s_1,s_2\in S$ be two noncommuting elements. Then 
$$
I[s_1,s_2] = \left[ I(s_1),I(s_2) \right] = \left[ s_1^{-1},s_2^{-1} \right]
$$
is conjugate to $[s_1,s_2]$. Let $\gamma$ be a closed local geodesic
representing the conjugacy class of $[s_1,s_2]$. It follows that 
$I\circ \gamma = \gamma$ (up to reparametrisation by a shift).

Let $\beta$ be a $1$-form supported in a small
ball such that $\int_{\gamma}\beta >0$, where $\gamma$ is as above. 
Let $\alpha = \beta+I^*(\beta)$. Then
\begin{align*}
\psi_{\alpha}[s_1,s_2] &=
\int_{\gamma}\alpha = \int_{\gamma} \beta + I^*(\beta)\\
&= \int_{\gamma}\beta + \int_{I\circ \gamma}\beta
= 2\int_{\gamma}\beta >0.  
\end{align*}

We thus constructed a nontrivial homogeneous quasimorphism which vanishes on
the generating set $S$. A standard computation shows that it is Lipschitz
with respect to the word norm associated with $S$:
$$
|\psi_{\alpha}(g)| = |\psi_{\alpha}(s_1\cdots s_n)| \leq
\sum_{i=1}^n |\psi_{\alpha}(s_i)| + \|g\|_S D = D \|g\|_S.
$$
where $D$ is the defect of $\psi_{\alpha}$. We finally obtain that
$$
0<n|\psi_{\alpha}\left( [s_1,s_2] \right)| =
|\psi_{\alpha}([s_1,s_2]^n)| \leq D\|[s_1,s_2]^n\|_S
$$
which shows that the translation length of $[s_1,s_2]$ is positive and
hence the closed geodesic norm on $\Gamma_x$ is stably unbounded.
In particular, it has infinite diameter.
\end{proof}

\begin{corollary}\label{C:translation}
Let $(M,d)$ be as in Theorem \ref{T:negative}. Let $s_1,s_2\in S$ be
noncommuting elements represented by closed local geodesics.  Then 
their commutator $[s_1,s_2]$ has positive translation length with respect
to the closed geodesic norm. 
\qed
\end{corollary}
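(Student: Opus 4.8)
The plan is to observe that this corollary is, in essence, already proved inside the proof of Theorem~\ref{T:negative}, and that the only thing to check is that the construction carried out there applies verbatim to an \emph{arbitrary} prescribed pair of noncommuting elements $s_1,s_2\in S$, rather than merely to some pair whose existence is guaranteed by the nonabelianity of $\Gamma_x$. So I would begin by fixing the given $s_1,s_2$ and running the argument of Theorem~\ref{T:negative} with this pair. As there, $I[s_1,s_2]=[s_1^{-1},s_2^{-1}]$ is conjugate to $[s_1,s_2]$, so the closed local geodesic $\gamma$ representing the conjugacy class of $[s_1,s_2]$ satisfies $I\circ\gamma=\gamma$ up to a shift; choosing a bump $1$-form $\beta$ with $\int_\gamma\beta>0$ and setting $\alpha=\beta+I^*(\beta)$ produces a homogeneous quasimorphism $\psi_\alpha$ that vanishes on $S$ and has $\psi_\alpha([s_1,s_2])=2\int_\gamma\beta>0$.

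Next I would extract the quantitative estimate rather than just the qualitative conclusion. The Lipschitz bound $|\psi_\alpha(g)|\le D\|g\|_S$ (with $D$ the defect of $\psi_\alpha$), applied to $g=[s_1,s_2]^n$ and combined with homogeneity $\psi_\alpha([s_1,s_2]^n)=n\,\psi_\alpha([s_1,s_2])$, gives
\[
n\,|\psi_\alpha([s_1,s_2])| \le D\,\|[s_1,s_2]^n\|_S .
\]
Dividing by $n$ and passing to the limit yields
\[
\tau([s_1,s_2]) = \lim_{n\to\infty}\frac{\|[s_1,s_2]^n\|_S}{n} \ge \frac{|\psi_\alpha([s_1,s_2])|}{D} > 0,
\]
which is exactly the asserted positivity of the translation length.

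There is no real obstacle here: the content is entirely contained in Theorem~\ref{T:negative}, and the corollary simply records that the lower bound produced in that proof is attached to the specific commutator $[s_1,s_2]$ under consideration and depends only on the chosen pair. The one point worth stating explicitly is that the relation $I^*(\alpha)=\alpha$ used to make $\psi_\alpha$ vanish on $S$ relies on $I$ being an involution, so that $I^*(I^*\beta)=\beta$; this is immediate from $I$ being a geodesic symmetry. Thus the proof reduces to citing the estimate already derived, and the corollary follows.
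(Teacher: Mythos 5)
Your proposal is correct and follows exactly the paper's intent: the corollary is marked \qed precisely because the proof of Theorem~\ref{T:negative} already establishes positive translation length for the specific pair $s_1,s_2$ via the quasimorphism $\psi_\alpha$ with $\alpha=\beta+I^*(\beta)$, and your rerun of that argument (including the limit $\tau([s_1,s_2])\ge |\psi_\alpha([s_1,s_2])|/D>0$) matches it step for step. Your side remark that $I^*(\alpha)=\alpha$ uses $I$ being an involution is a fair point of care, and it holds since a geodesic symmetry on a complete manifold squares to the identity.
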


\bibliography{/home/kedra/sync/bibliography}
\bibliographystyle{plain}

\end{document}